\newtheorem{theorem}{Theorem}[section]
\newtheorem{lemma}[theorem]{Lemma}
\newtheorem{corollary}[theorem]{Corollary}
\newtheorem{remark}[theorem]{Remark}
\title{{\bf On the second largest distance eigenvalue of a graph}}
\author{ Ruifang Liu$^{a}$\thanks{Supported by NSFC (No.~11201432) and NSF-Henan (Nos.~15A110003 and 15IRTSTHN006). E-mail address:~rfliu@zzu.edu.cn
(R.Liu).} ~ Jie Xue$^{a}$ ~ Litao Guo$^{b}$\thanks{ Supported by
NSFC (No.~11301440) and NSF-Fujian (No.~JA13240).}
\\ ~ \\
{\footnotesize $^a$ School of Mathematics and Statistics, Zhengzhou
University,
Zhengzhou, Henan 450001, China}\\
{\footnotesize $^b$ School of Applied Mathematics, Xiamen University
of Technology, Xiamen, Fujian 361024, China }}
\date{}
\begin{document}
\maketitle

\begin{abstract}
Let $G$ be a simple connected graph of order $n$ and $D(G)$ be the
distance matrix of $G.$ Suppose that
$\lambda_{1}(D(G))\geq\lambda_{2}(D(G))\geq\cdots\geq\lambda_{n}(D(G))$
are the distance spectrum of $G$. A graph $G$ is said to be
determined by its $D$-spectrum if with respect to the distance
matrix $D(G)$, any graph with the same spectrum as $G$ is isomorphic
to $G$. In this paper, we consider spectral characterization on the
second largest distance eigenvalue $\lambda_{2}(D(G))$ of graphs,
and prove that the graphs with
$\lambda_{2}(D(G))\leq\frac{17-\sqrt{329}}{2}\approx-0.5692$ are
determined by their $D$-spectra.

\bigskip
\noindent {\bf AMS Classification:} 05C50, 05C12

\noindent {\bf Key words:} Distance matrix; The second largest
distance eigenvalue; $D$-spectrum determined
\end{abstract}

\section{Introduction}

~~~~All graphs considered here are simple, undirected and connected.
Let $G$ be a graph with vertex set
$V(G)=\{v_{1},v_{2},\ldots,v_{n}\}$ and edge set $E(G)$. Two
vertices $u$ and $v$ are called adjacent if they are connected by an
edge. Let $d_{G}(v)$ and $N_{G}(v)$ denote the degree and the
neighbor set of a vertex $v$ in $G$, respectively. The distance
between vertices $u$ and $v$ of a graph $G$ is denoted by
$d_G(u,v)$. The diameter of $G,$ denoted by $d$ or $d(G),$ is the
maximum distance between any pair of vertices of $G.$ Let $X$ and
$Y$ be subsets of vertices of $G$. The induced subgraph $G[X]$ is
the subgraph of $G$ whose vertex set is $X$ and whose edge set
consists of all edges of $G$ which have both ends in $X$. For any
$v\in V(G),$ denote by $G-v$ the induced subgraph $G[V\setminus \{v
\}].$ The complete product $G_{1}\bigtriangledown G_{2}$ of graphs
$G_{1}$ and $G_{2}$ is the graph obtained from $G_{1}\cup G_{2}$ by
joining every vertex of $G_{1}$ with every vertex of $G_{2}.$

The distance matrix $D(G)=(d_{ij})_{n\times n}$ of a connected graph
$G$ is the matrix indexed by the vertices of $G,$ where $d_{ij}$
denotes the distance between the vertices $v_{i}$ and $v_{j}$. Let
$\lambda_{1}(D(G))\geq\lambda_{2}(D(G))\geq\cdots\geq\lambda_{n}(D(G))$
be the distance spectrum of $G,$ where $\lambda_{2}(D(G))$ is called
the second largest distance eigenvalue of $G.$ The polynomial
$P_{D}(\lambda)=det(\lambda I-D(G))$ is defined as the distance
characteristic polynomial of the graph $G.$ Two graphs are said to
be $D$-cospectral if they have the same distance spectrum. A graph
$G$ is said to be determined by the $D$-spectra if there is no other
nonisomorphic graph $D$-cospectral to $G$.

Which graphs are determined by their spectrum seems to be a
difficult and interesting problem in the theory of graph spectra.
This question was proposed by Dam and Haemers in \cite{ER1}. In this
paper, Dam and Haemers investigated the cospectrality of graphs up
to order 11. Later, Dam et al. \cite{ER2,ER3} provided two excellent
surveys on this topic. Up to now, only a few families of graphs were
shown to be determined by their spectra. In particular, there are
much fewer results on which graphs are determined by their
$D$-spectra. In \cite{LHQ}, Lin et al. proved that the complete
bipartite graph $K_{n_{1},n_{2}}$ and the complete split graph
$K_{a}\bigtriangledown K_{b}^{c}$ are determined by $D$-spectra, and
conjecture that the complete $k$-partite graph $K_{n_{1}, n_{2},
\ldots, n_{k}}$ is determined by its $D$-spectrum. Recently, Jin and
Zhang \cite{XDZ} have confirmed the conjecture. Xue and Liu
\cite{XL} showed that some special graphs $K^{h}_{n},$ $K^{s+t}_{n}$
and $K^{s,t}_{n}$ are determined by their $D$-spectra. Lin, Zhai and
Gong \cite{LZG} characterized all connected graphs with
$\lambda_{n-1}(D(G))=-1$, and showed that these graphs are
determined by their $D$-spectra. Moreover, in this paper, they also
proved that the graphs with $\lambda_{n-2}(D(G))>-1$ are determined
by their distance spectra. In this paper, we prove that the graphs
with $\lambda_{2}(D(G))\leq\frac{17-\sqrt{329}}{2}\approx-0.5692$
are determined by their $D$-spectra.

Next, we introduce two kinds of graphs $K_{s}^{t}$ and
$K^{n_{1},n_{2},\ldots, n_{k}}_{n}$ as shown in Fig. 1.

$\bullet$ $K_{s}^{t}$: the graph obtained by adding a pendant edge
to $t$ vertices of $K_{s}$, where $2\leq t\leq s$ and $n=s+t$.

$\bullet$ $K^{n_{1},n_{2},\ldots, n_{k}}_{n}$: the graph $G$ with
$d_{G}(v)=n-1$ and $G-v$ is the disjoint union of some complete
graphs, where $n=\sum_{i=1}^{k}n_{i}+1$ and $k\geq2.$

\begin{center}
\setlength{\unitlength}{1.0mm}
\begin{picture}(120,20)

\put(25,0){\bigcircle{20}}
\put(14,18){\circle*{1}}
\curve(14,18,20,8.5)
\put(36,18){\circle*{1}}
\curve(36,18,30,8.5)
\put(14,-18){\circle*{1}}
\curve(14,-18,20,-8.5)
\put(36,-18){\circle*{1}}
\curve(36,-18,30,-8.5)
\put(35,0){\line(1,0){10}}\put(45,0){\circle*{1}}
\put(15,0){\line(-1,0){10}}\put(5,0){\circle*{1}}
\put(23,-1){$K_{s}$}

\put(95,0){\circle*{1}}
\put(80,13){\bigcircle{10}}\put(77,12){$K_{n_{2}}$}
\put(80,-13){\bigcircle{10}}\put(77,-14){$K_{n_{1}}$}
\put(110,13){\bigcircle{10}}\put(107,12){$K_{n_{3}}$}
\put(110,-13){\bigcircle{10}}\put(107,-14){$K_{n_{k}}$}

\curve(84,16,95,0)\curve(106,16,95,0)\curve(84,-16,95,0)\curve(106,-16,95,0)
\curve(78,8.4,95,0)\curve(112,8.4,95,0)\curve(78,-8.4,95,0)\curve(112,-8.4,95,0)
\put(110,0){\circle*{0.55}}\put(110,3){\circle*{0.55}}\put(110,-3){\circle*{0.55}}
\put(94,-5){$v$}

\end{picture}
\vskip 2.5cm  Fig. $1$. Graphs $K_{s}^{t}$ and $K^{n_{1},n_{2},\ldots,
n_{k}}_{n}$.
\end{center}

\section{Preliminaries}

~~~~Before presenting the proof of the main result, we give some
important lemmas and theorems. The following lemma is well-known
Cauchy Interlace Theorem.

\begin{lemma}{\bf (\cite{DMC})} \label{le1}
Let $A$ be a Hermitian matrix of order $n$ with eigenvalues
$\lambda_{1}(A)\geq\lambda_{2}(A)\geq\cdots \geq\lambda_{n}(A),$ and
$B$ be a principal submatrix of $A$ of order $m$ with eigenvalues
$\mu_{1}(B)\geq \mu_{2}(B)\geq\cdots \geq\mu_{m}(B)$. Then
$\lambda_{n-m+i}(A)\leq\mu_{i}(B)\leq\lambda_{i}(A)$ for
$i=1,2,\ldots,m.$
\end{lemma}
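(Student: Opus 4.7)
The plan is to establish the Cauchy Interlace Theorem via the Courant--Fischer variational characterization of the eigenvalues of a Hermitian matrix. Recall that for a Hermitian matrix $A$ of order $n$,
\[
\lambda_i(A) \;=\; \max_{\dim S = i}\; \min_{\substack{x \in S \\ \|x\| = 1}} x^{*} A x \;=\; \min_{\dim S = n-i+1}\; \max_{\substack{x \in S \\ \|x\| = 1}} x^{*} A x,
\]
with $S$ ranging over linear subspaces of $\mathbb{C}^n$. These two dual formulas are tailor-made here, since the claimed inequalities $\lambda_{n-m+i}(A) \leq \mu_i(B) \leq \lambda_i(A)$ are precisely comparisons between an unrestricted optimum on $\mathbb{C}^n$ and a constrained optimum on a coordinate subspace.

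The first step is a harmless normalization: after conjugating $A$ by a suitable permutation matrix, which preserves both spectra, I may assume that $B$ is the leading $m \times m$ principal block of $A$. Let $W \subseteq \mathbb{C}^n$ denote the span of the first $m$ standard basis vectors, so that $W$ is naturally identified with $\mathbb{C}^m$ and the Rayleigh quotients agree, $x^{*} A x = y^{*} B y$ whenever $x \in W$ corresponds to $y \in \mathbb{C}^m$. For the upper bound $\mu_i(B) \leq \lambda_i(A)$ I apply the max-min formula: every $i$-dimensional subspace of $\mathbb{C}^m$ lifts to an $i$-dimensional subspace of $W \subseteq \mathbb{C}^n$, so the outer max-min taken over the larger family of all $i$-dimensional subspaces of $\mathbb{C}^n$ is at least the max-min taken over the lifted subspaces, which equals $\mu_i(B)$. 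For the lower bound $\lambda_{n-m+i}(A) \leq \mu_i(B)$ I use the dual min-max form; since $n - (n-m+i) + 1 = m - i + 1$ exactly matches the subspace dimension appearing in the min-max description of $\mu_i(B)$, restricting to subspaces of $W$ can only increase the outer minimum.

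The only delicate issue, and the main bookkeeping obstacle, is keeping the two index shifts ($i$ and $n-m+i$) aligned with the correct variational principle (max-min versus min-max) and with the corresponding trial-subspace dimensions ($i$ and $m-i+1$) after the ambient space is reduced from $\mathbb{C}^n$ to $\mathbb{C}^m$. Once these dimensions are tracked consistently, both interlacing estimates follow from the elementary monotonicity of $\max$ and $\min$ under subspace inclusion, with no further ingredient needed.
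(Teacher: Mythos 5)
Your proof is correct: the permutation-conjugation reduction to the leading principal block, the use of the max--min form for $\mu_i(B)\leq\lambda_i(A)$, and the dual min--max form with the dimension count $n-(n-m+i)+1=m-i+1$ for $\lambda_{n-m+i}(A)\leq\mu_i(B)$ are all handled properly. Note that the paper itself gives no proof of this lemma, citing it as the classical Cauchy Interlace Theorem from the reference \cite{DMC}; your Courant--Fischer argument is the standard proof of that cited result, so there is nothing to reconcile.
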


Applying Lemma \ref{le1} to the distance matrix $D$ of a graph, we
have

\begin{lemma}\label{le2}
Let $G$ be a graph of order $n$ with distance spectrum
$\lambda_{1}(D(G))\geq\lambda_{2}(D(G))\geq\cdots
\geq\lambda_{n}(D(G)),$ and $H$ be an induced subgraph of $G$ on $m$
vertices with the distance spectrum $\mu_{1}(D(H))\geq
\mu_{2}(D(H))\geq\cdots \geq\mu_{m}(D(H)).$ Moreover, if $D(H)$ is a
principal submatrix of $D(G),$ then
$\lambda_{n-m+i}(D(G))\leq\mu_{i}(D(H))\leq\lambda_{i}(D(G))$ for
$i=1,2,\ldots,m.$
\end{lemma}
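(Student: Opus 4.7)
The plan is to reduce Lemma \ref{le2} directly to Lemma \ref{le1} by checking that its hypotheses apply verbatim to the pair $(D(G), D(H))$. First I would observe that $D(G)$ is a real symmetric matrix: its entries are the nonnegative reals $d_G(v_i, v_j)$, and $d_G(u,v)=d_G(v,u)$, so $D(G)$ is Hermitian of order $n$. Thus the first hypothesis of Lemma \ref{le1} is satisfied with $A = D(G)$.

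Next I would invoke the standing hypothesis of Lemma \ref{le2}, namely that $D(H)$ is a principal submatrix of $D(G)$, to set $B = D(H)$. With this identification, the eigenvalues of $B$ are exactly $\mu_1(D(H)) \geq \mu_2(D(H)) \geq \cdots \geq \mu_m(D(H))$. An application of Lemma \ref{le1} then yields $\lambda_{n-m+i}(D(G)) \leq \mu_i(D(H)) \leq \lambda_i(D(G))$ for $i=1,\ldots,m$, which is exactly the desired conclusion.

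The only substantive point to flag—and the reason the assumption ``$D(H)$ is a principal submatrix of $D(G)$'' is stated separately rather than folded into ``$H$ is an induced subgraph''—is that this identification is not automatic. If $H$ is merely an induced subgraph of $G$, then distances computed inside $H$ may strictly exceed the corresponding distances in $G$, because a shortest $u$--$v$ path in $G$ can pass through vertices of $V(G)\setminus V(H)$. So in general $D(H)$ differs from the submatrix of $D(G)$ indexed by $V(H)$, and Cauchy interlacing cannot be invoked. The lemma therefore applies precisely when $H$ is a \emph{distance-preserving} (isometric) induced subgraph of $G$; once that is assumed, no further work beyond citing Lemma \ref{le1} is required. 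I do not expect any real obstacle in writing this out.
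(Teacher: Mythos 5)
Your proof is correct and is exactly the paper's argument: the paper derives Lemma \ref{le2} by applying Lemma \ref{le1} with $A=D(G)$ (real symmetric, hence Hermitian) and $B=D(H)$, using the stated hypothesis that $D(H)$ is a principal submatrix of $D(G)$. Your remark that this hypothesis is needed because an induced subgraph need not be isometric is a correct and worthwhile clarification, but it does not change the route.
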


\begin{theorem}\label{th20}
Let $G$ be a connected graph and $D(G)$ be the distance matrix of
$G.$ Then $\lambda_{2}(D(G))\geq-1,$ with the equality if and only
if $G\cong K_{n}.$
\end{theorem}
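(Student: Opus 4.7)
The plan is to apply Cauchy interlacing (Lemma~\ref{le2}) to carefully chosen small principal submatrices of $D(G)$, treating the inequality and the equality case separately. For the easy half of the equality, I would simply note that $D(K_n) = J_n - I_n$ has spectrum $\{n-1,\,(-1)^{(n-1)}\}$, so $\lambda_2(D(K_n)) = -1$.

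To obtain the inequality $\lambda_2(D(G)) \geq -1$ for an arbitrary connected $G$ of order $n \geq 2$, I would pick any edge $uv$ of $G$ and look at the principal submatrix of $D(G)$ indexed by $\{u,v\}$, which is $\left(\begin{smallmatrix} 0 & 1 \\ 1 & 0 \end{smallmatrix}\right)$ with eigenvalues $\pm 1$. Lemma~\ref{le2} (with $m = 2$ and $i = 2$) then gives $\lambda_2(D(G)) \geq -1$ at once.

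For the remaining direction of the equality, I would argue contrapositively: assume $G \not\cong K_n$ and show $\lambda_2(D(G)) > -1$. Since $G$ is connected but not complete, walking along a shortest path between some non-adjacent pair produces vertices $u, v$ with $d_G(u,v) = 2$ sharing a common neighbor $w$. The principal submatrix of $D(G)$ indexed by $\{u,v,w\}$ is
\begin{equation*}
B = \begin{pmatrix} 0 & 2 & 1 \\ 2 & 0 & 1 \\ 1 & 1 & 0 \end{pmatrix},
\end{equation*}
and a short calculation yields characteristic polynomial $\lambda^3 - 6\lambda - 4 = (\lambda + 2)(\lambda^2 - 2\lambda - 2)$, giving $\mu_2(B) = 1 - \sqrt{3} > -1$. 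One more application of Lemma~\ref{le2} then finishes the argument. I do not expect any genuine obstacle: the only substantive step is spotting that a pair at distance exactly $2$ (together with their common neighbor) already forces $\lambda_2$ strictly above $-1$; the specific value $1 - \sqrt{3}$ is incidental.
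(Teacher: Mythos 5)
Your proposal is correct and follows essentially the same route as the paper: interlacing with the $2\times 2$ submatrix of an edge gives $\lambda_{2}(D(G))\geq -1$, and for a non-complete $G$ the submatrix on a distance-two pair together with a common neighbor is exactly $D(P_{3})$, whose second eigenvalue $1-\sqrt{3}>-1$ forces strict inequality. Your version just makes the choice of the three vertices explicit, which the paper phrases as ``$D(P_{3})$ is a principal submatrix of $D(G)$''; otherwise the arguments coincide.
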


\begin{proof}
Let $G$ be a connected graph with order $n\geq2,$ then $P_{2}$ is an
induced subgraph of $G,$ and $D(P_{2})$ is a principal matrix of
$D(G).$ Note that $\lambda_{2}(D(P_{2}))=-1,$ then by Lemma
\ref{le2}, $\lambda_{2}(D(G))\geq \lambda_{2}(D(P_{2}))=-1.$

For the equality, we only need to prove the necessity. Suppose that
$G$ is not complete graph, then $d(G)\geq 2.$ As a result,
$D(P_{3})$ is a principal submatrix of $D(G).$ Note that
$\lambda_{2}(D(P_{3}))=1-\sqrt{3},$ by Lemma \ref{le2},
$\lambda_{2}(D(G))\geq
\lambda_{2}(D(P_{3}))=1-\sqrt{3}\approx-0.7321,$ a contradiction.
Hence $G\cong K_{n}.$ \ \ $\Box$
\end{proof}

\begin{remark}\label{re1}
From the proof of the above theorem, there is no graph of order $n$
with $-1<\lambda_{2}(D(G))<1-\sqrt{3}.$
\end{remark}

First we will investigate which graphs satisfy
$\lambda_{2}(D(G))\leq\frac{17-\sqrt{329}}{2}\approx -0.5692$. Let
$G$ be a graph with $\lambda_{2}(D(G))\leq\frac{17-\sqrt{329}}{2}$.
We call $H$ a forbidden subgraph of $G$ if $G$ contains no $H$ as an
induced subgraph.

For any $S\subseteq V(G)$, let $D_{G}(S)$ denote the principal
submatrix of $D(G)$ obtained by $S$.

\begin{lemma}\label{le4}
Let $G$ be a connected graph and $D(G)$ be the distance matrix of
$G.$ If $\lambda_{2}(D(G))\leq\frac{17-\sqrt{329}}{2}$, then
$C_{4}$, $C_{5}$, $P_{5}$ and $H_{i}~(i\in\{1,2,3\})$ are forbidden
subgraphs of $G$.
\end{lemma}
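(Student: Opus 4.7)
The plan is to argue by contradiction for each graph $H$ in the list: suppose $H$ occurs as an induced subgraph of $G$ on some vertex set $S\subseteq V(G)$. The matrix $D_G(S)$ is a principal submatrix of $D(G)$, so Lemma~\ref{le2} yields
\[
\lambda_{2}(D(G))\ \geq\ \lambda_{2}(D_G(S)).
\]
Hence it suffices to prove $\lambda_{2}(D_G(S))>\frac{17-\sqrt{329}}{2}$ for every distance matrix $D_G(S)$ compatible with $G[S]\cong H$.

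For $H=C_{4}$ or $C_{5}$ the pairwise distances in $G$ are forced: for any two non-adjacent vertices $u,v$ on the induced cycle, $d_G(u,v)\geq 2$ because $uv\notin E(G)$, while the cycle itself supplies a common neighbour of $u$ and $v$, so $d_G(u,v)=2$. Thus $D_G(S)=D(C_{4})$ or $D(C_{5})$ exactly. Using $D(C_{k})=2J-2I-A(C_{k})$ and that the all-ones vector is an eigenvector of both $J$ and $A(C_{k})$, the spectra are immediate: $\{4,0,-2,-2\}$ for $C_{4}$ and $\bigl\{6,\tfrac{\sqrt{5}-3}{2},\tfrac{\sqrt{5}-3}{2},-\tfrac{\sqrt{5}+3}{2},-\tfrac{\sqrt{5}+3}{2}\bigr\}$ for $C_{5}$. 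In both cases the second largest eigenvalue ($0$ and $\tfrac{\sqrt{5}-3}{2}\approx-0.382$ respectively) strictly exceeds $\frac{17-\sqrt{329}}{2}\approx-0.5692$.

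For $H\in\{P_{5},H_{1},H_{2},H_{3}\}$ the distances in $G$ are not entirely determined by the induced-subgraph structure: for each non-edge $uv$ of $H$ one has only $2\leq d_G(u,v)\leq d_H(u,v)$. Concretely, for $P_{5}=v_{1}v_{2}v_{3}v_{4}v_{5}$ the entries $d_G(v_{1},v_{3})$, $d_G(v_{2},v_{4})$, $d_G(v_{3},v_{5})$ are forced to equal $2$, while $d_G(v_{1},v_{4}),d_G(v_{2},v_{5})\in\{2,3\}$ and $d_G(v_{1},v_{5})\in\{2,3,4\}$, producing a short finite list of candidate matrices $D_G(S)$. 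For each candidate I would compute the characteristic polynomial explicitly, isolate its second root, and verify $\lambda_{2}(D_G(S))>\frac{17-\sqrt{329}}{2}$. The same enumeration-and-check procedure handles $H_{1},H_{2},H_{3}$.

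The main obstacle is the bookkeeping in the second part: the number of admissible $D_G(S)$ grows with $|S|$ and with the diameter of $H$, and in each case the second largest root of a small symmetric integer matrix has to be compared against the algebraic number $\frac{17-\sqrt{329}}{2}$, which is the smaller root of $x^{2}-17x-10$. The individual computations are elementary (small matrices, exact quadratic arithmetic), but require care; exploiting obvious symmetries or eigenvectors of each $D_G(S)$ (for example an all-ones vector when the row sums are constant, or sign-change vectors reflecting an automorphism of $H$) is the practical way to keep the case analysis tractable, and the specific constant $\frac{17-\sqrt{329}}{2}$ almost certainly arises as $\lambda_{2}$ of one of the $H_{i}$, so that at least one of the candidate matrices in the enumeration meets the threshold with no slack.
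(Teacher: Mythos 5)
Your proposal follows essentially the same route as the paper: assume the listed graph occurs as an induced subgraph, apply interlacing (Lemma \ref{le2}) to the principal submatrix $D_G(S)$, note that the distances are forced for $C_4$ and $C_5$ (and for $H_1$) while for $P_5$, $H_2$, $H_3$ only a few entries are free (e.g.\ $d_G(v_1,v_4),d_G(v_2,v_5)\in\{2,3\}$ and $d_G(v_1,v_5)\in\{2,3,4\}$ for $P_5$), and check that every admissible candidate matrix has $\lambda_{2}>\frac{17-\sqrt{329}}{2}$ --- the paper simply carries out the finite case tables you defer, the tightest value being $-0.5686$ for $H_3$ with both free distances equal to $3$. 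One minor point: none of these candidate matrices attains the threshold $\frac{17-\sqrt{329}}{2}$ (all second eigenvalues exceed it strictly), so your closing speculation about the constant's origin is inaccurate but harmless to the argument.
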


\begin{center}
\setlength{\unitlength}{1.0mm}
\begin{picture}(100,45)
\put(5,0){\line(1,0){40}}
\put(5,0){\circle*{1}}\put(15,0){\circle*{1}}\put(25,0){\circle*{1}}\put(35,0){\circle*{1}}\put(45,0){\circle*{1}}
\put(4,2){$v_{1}$}\put(14,2){$v_{2}$}\put(24,2){$v_{3}$}\put(34,2){$v_{4}$}\put(44,2){$v_{5}$}
\put(23,-7){$P_{5}$}

\put(65,0){\line(1,0){30}}
\put(65,0){\circle*{1}}\put(75,0){\circle*{1}}\put(85,0){\circle*{1}}\put(95,0){\circle*{1}}
\curve(70,10,65,0)\curve(70,10,75,0)\put(70,10){\circle*{1}}
\put(62,2){$v_{1}$}\put(75,2){$v_{2}$}\put(84,2){$v_{3}$}\put(94,2){$v_{4}$}\put(71,11){$v_{5}$}
\put(78,-7){$H_{3}$}

\put(0,25){\line(1,0){12}}\put(0,25){\line(0,1){12}}\put(0,37){\line(1,0){12}}\put(12,37){\line(0,-1){12}}
\put(0,25){\circle*{1}}\put(0,37){\circle*{1}}\put(12,25){\circle*{1}}\put(12,37){\circle*{1}}
\put(4,18){$C_{4}$}

\put(24,25){\line(1,0){10}}\curve(22,33,24,25)\curve(22,33,29,38)\curve(36,33,34,25)\curve(36,33,29,38)
\put(24,25){\circle*{1}}\put(22,33){\circle*{1}}\put(29,38){\circle*{1}}\put(36,33){\circle*{1}}\put(34,25){\circle*{1}}
\put(27,18){$C_{5}$}

\put(46,25){\line(1,0){12}}\put(46,25){\line(0,1){12}}\put(46,37){\line(1,0){12}}\put(58,37){\line(0,-1){12}}\put(46,25){\line(1,1){12}}
\put(46,25){\circle*{1}}\put(46,37){\circle*{1}}\put(58,37){\circle*{1}}\put(58,25){\circle*{1}}
\put(50,18){$H_{1}$}

\put(69,25){\line(1,0){30}}\curve(73,35,79,25)
\put(69,25){\circle*{1}}\put(79,25){\circle*{1}}\put(89,25){\circle*{1}}\put(99,25){\circle*{1}}\put(73,35){\circle*{1}}
\put(68,27){$v_{1}$}\put(78,27){$v_{2}$}\put(88,27){$v_{3}$}\put(98,27){$v_{4}$}\put(73,36){$v_{5}$}
\put(82,18){$H_{2}$}

\end{picture}
\vskip 1cm  Fig. $2$. Graphs $C_{4}$, $C_{5}$, $P_{5}$ and $H_{1}-H_{3}.$
\end{center}

\begin{proof}
Note that the diameters of graphs $C_{4}$, $C_{5}$ and $H_{1}$ are
all 2. Their $D$-spectra are shown as follows.\\

\begin{tabular}{c|c|c|c|c|c}
\hline
 & $\lambda_{1}$ & $\lambda_{2}$ & $\lambda_{3}$ & $\lambda_{4}$ & $\lambda_{5}$ \\
\hline
$C_{4}$&4.0000&0.0000&-2.0000&-2.0000&\\
\hline
$C_{5}$&6.0000&-0.3820&-0.3820&-2.6180&-2.6180\\
\hline
$H_{1}$&3.5616&-0.5616&-1.0000&-2.0000&\\
\hline
\end{tabular}.\\

If $C_{4}$ is an induced subgraph of $G$, then $D(C_{4})$ is a
principal submatrix of $D(G)$. By Lemma \ref{le2}, then
$\lambda_{2}(D(G))\geq
\lambda_{2}(D(C_{4})>\frac{17-\sqrt{329}}{2}\approx-0.5692$, a
contradiction. Thus $C_{4}$ is a forbidden subgraph of $G$.
Similarly, we can prove that $C_{5}$ and $H_{1}$ are also forbidden
subgraphs of $G$.

Consider $P_{5}$. Suppose that $P_{5}$ is an induced subgraph of
$G$, then $d_{G}(v_{1},v_{5})\in \{2,3,4\}$. If
$d_{G}(v_{1},v_{5})=4$, then
$D_{G}(\{v_{1},v_{2},v_{3},v_{4},v_{5}\})=D(P_{5})$, thus $D(P_{5})$
is a principal submatrix of $D(G)$. By Lemma \ref{le2}, we have
$\lambda_{2}(D(G))\geq
\lambda_{2}(D(P_{5}))=-0.5578>\frac{17-\sqrt{329}}{2}$, a
contradiction. If $d_{G}(v_{1},v_{5})\in \{2,3\}$, let
$d_{G}(v_{1},v_{4})=a$, $d_{G}(v_{1},v_{5})=b$ and
$d_{G}(v_{2},v_{5})=c$, then $a,b,c\in \{2,3\}$. Hence the principal
submatrix of $D(G)$
\begin{equation*}\begin{split}
D_{G}(\{v_{1},v_{2},v_{3},v_{4},v_{5}\})&=\left(\begin{array}{cccccccc}
0&1&2&a&b\\
1&0&1&2&c\\
2&1&0&1&2\\
a&2&1&0&1\\
b&c&2&1&0\\
\end{array}\right).\\
\end{split}\end{equation*}
By a simple calculation, we have\\\\
\begin{tabular}{c|c|c|c|c|c|c|c|c}
\hline
$(a,b,c)$ & $(3,3,3)$ & $(3,2,2)$ & $(3,2,3)$ & $(3,3,2)$ & $(2,3,3)$ & $(2,3,2)$ & $(2,2,2)$ & $(2,2,3)$\\
\hline
$\lambda_{2}$&-0.4348&-0.3260&0&-0.3713&-0.3713&-0.1646&-0.2909&-0.3260\\
\hline
\end{tabular}.\\\\
From Lemma \ref{le2}, this contradicts
$\lambda_{2}(D(G))\leq\frac{17-\sqrt{329}}{2}$. Hence $P_{5}$ is a
forbidden subgraph of $G$.

Consider $H_{2}$. Suppose that $H_{2}$ is an induced subgraph of
$G$. Let $d_{G}(v_{1},v_{4})=a$ and $d_{G}(v_{4},v_{5})=b$, then
$a,b\in \{2,3\}$. We get the principal submatrix of $D(G)$:
\begin{equation*}\begin{split}
D_{G}(\{v_{1},v_{2},v_{3},v_{4},v_{5}\})&=\left(\begin{array}{cccccccc}
0&1&2&a&2\\
1&0&1&2&1\\
2&1&0&1&2\\
a&2&1&0&b\\
2&1&2&b&0\\
\end{array}\right).\\
\end{split}\end{equation*}
By a simple calculation, we have\\\\
\begin{tabular}{c|c|c|c|c}
\hline
$(a,b)$ & $(3,3)$ & $(2,3)$ & $(3,2)$ & $(2,2)$\\
\hline
$\lambda_{2}$&-0.5120&-0.3583&-0.3583&-0.2245\\
\hline
\end{tabular}.\\\\
By Lemma \ref{le2}, $\lambda_{2}(D(G))\geq
\lambda_{2}(D_{G}(\{v_{1},v_{2},v_{3},v_{4},v_{5}\}))>\frac{17-\sqrt{329}}{2}$,
a contradiction. Hence $H_{2}$ is a forbidden subgraph of $G$.

Consider $H_{3}$. Suppose that $H_{3}$ is an induced subgraph of
$G$. Let $d_{G}(v_{1},v_{4})=a$ and $d_{G}(v_{4},v_{5})=b$, then
$a,b\in \{2,3\}$. Then the principal submatrix of $D(G)$
\begin{equation*}\begin{split}
D_{G}(\{v_{1},v_{2},v_{3},v_{4},v_{5}\})&=\left(\begin{array}{cccccccc}
0&1&2&a&1\\
1&0&1&2&1\\
2&1&0&1&2\\
a&2&1&0&b\\
1&1&2&b&0\\
\end{array}\right).\\
\end{split}\end{equation*}
By a simple calculation, we have\\\\
\begin{tabular}{c|c|c|c|c}
\hline
$(a,b)$ & $(3,3)$ & $(2,3)$ & $(3,2)$ & $(2,2)$\\
\hline
$\lambda_{2}$&-0.5686&-0.3626&-0.3626&-0.3311\\
\hline
\end{tabular}.\\\\
From Lemma \ref{le2}, we also get a contradiction since
$\lambda_{2}(D(G))\leq\frac{17-\sqrt{329}}{2}$. Therefore $H_{3}$ is
a forbidden subgraph of $G$. \ \ $\Box$
\end{proof}

Let $\mathbb{K}_{s}^{t}=\{K_{s}^{t}|2\leq t\leq s, s+t=n\}$ and
$\mathbb{K}^{n_{1},n_{2},\ldots, n_{k}}_{n}=\{K^{n_{1},n_{2},\ldots,
n_{k}}_{n}|\sum_{i=1}^{k}n_{i}+1=n, k\geq 2\}.$

\begin{lemma}\label{le5}
Let $G$ be a connected graph and $D(G)$ be the distance matrix of
$G.$ If $\lambda_{2}(D(G))\leq\frac{17-\sqrt{329}}{2}$, then $G\in
K_{n}\cup \mathbb{K}_{s}^{t}\cup \mathbb{K}^{n_{1},n_{2},\ldots,
n_{k}}_{n}$.
\end{lemma}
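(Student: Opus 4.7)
The plan is to use the forbidden subgraph list of Lemma \ref{le4} to bound $\mathrm{diam}(G)$ and then determine the structure of $G$ in each remaining case. Since $G$ is $P_5$-free, a shortest path of length $\geq 4$ would be an induced $P_5$, so $\mathrm{diam}(G)\le 3$. The case $\mathrm{diam}(G)=1$ forces $G=K_n$, which is already in the target family; I split the remaining work into the cases $\mathrm{diam}(G)=2$ and $\mathrm{diam}(G)=3$.

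\textbf{Diameter $2$.} Here I aim to show $G\in\mathbb{K}^{n_1,\ldots,n_k}_n$, i.e.\ $G$ has a universal vertex $v$ and $G-v$ is a disjoint union of cliques. A preliminary observation: $C_4$- and $H_1$-freeness force any two non-adjacent vertices to share at most one common neighbor (two common neighbors together with the pair would induce $C_4$ or $H_1$); combined with $\mathrm{diam}=2$ this becomes \emph{exactly} one common neighbor. To produce a universal vertex, take $v$ of maximum degree, set $A=N(v)$ and $B=V\setminus N[v]$, and assume $B\neq\emptyset$ for contradiction. Each $u\in B$ then has a unique neighbor $f(u)\in A$. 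I split on whether $f$ is injective. If some $w=f(u_1)=f(u_2)$ for distinct $u_1,u_2\in B$, then for each $x\in A\setminus\{w\}$ the set $\{v,w,u_1,u_2,x\}$ induces $H_2$ or $H_3$ whenever $wx\notin E$; so $w$ must be adjacent to every vertex of $A\setminus\{w\}$, giving $\deg(w)\geq|A|+2>\deg(v)$, contradicting maximality. If $f$ is injective, take distinct $u_1,u_2\in B$ with $f(u_i)=w_i$, $w_1\neq w_2$: when $u_1u_2\in E$, either $\{u_1,w_1,w_2,u_2\}$ induces $C_4$ (if $w_1w_2\in E$) or $\{v,w_1,u_1,u_2,w_2\}$ induces $C_5$ (if $w_1w_2\notin E$); when $u_1u_2\notin E$, their unique common neighbor must lie in $B$, say $u_3$, and $u_1u_3u_2w_2v$ is an induced $P_5$. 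Every branch contradicts Lemma \ref{le4}, so $B=\emptyset$ and $v$ is universal. Finally, any induced $P_3$ in $G-v$ together with $v$ would induce $H_1$, so $G-v$ is a disjoint union of cliques.

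\textbf{Diameter $3$.} Here I aim to show $G\cong K_s^t$. Fix $u_0,u_3$ with $d(u_0,u_3)=3$ and an induced shortest path $u_0u_1u_2u_3$, and partition $V(G)$ into level sets $V_i=\{x:d(u_0,x)=i\}$, $i=0,1,2,3$; by the diameter bound these cover $V(G)$. A parallel forbidden-subgraph analysis then yields: $V_2$ is a clique (a non-edge in $V_2$ together with $u_0$ and its $V_1$-neighbors produces an induced $P_5$, or $H_2$/$H_3$ via a $V_3$-vertex); each vertex of $V_3$ has a unique neighbor in $V_2$ (two neighbors would combine with $u_0$ to give $C_4$ or $H_1$); distinct vertices of $V_3$ are attached to distinct vertices of $V_2$ (else $C_4$ or $H_1$ through a $V_1$-vertex); and symmetrically $V_1=\{u_1\}$ with $u_0$ serving as a pendant. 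Combining these facts, $V_1\cup V_2$ forms a clique $K_s$ and $\{u_0\}\cup V_3$ supplies $t\geq 2$ pendants each attached to a distinct vertex of the clique, which is precisely $K_s^t$.

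The main obstacle is the diameter-$2$ universal-vertex argument: the non-injective sub-case hinges on the availability of some $x\in A\setminus\{w\}$ and a precise identification of $H_2$ versus $H_3$, while the injective sub-case requires watching the edges $u_1u_2$ and $w_1w_2$ jointly to exhibit $C_4$, $C_5$, or $P_5$. The diameter-$3$ analysis is longer but essentially mechanical once the level-set partition is set up, since each deviation from the desired pendants-on-a-clique structure is caught by one of the six forbidden induced subgraphs of Lemma \ref{le4}.
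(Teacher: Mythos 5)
Your overall route---using the forbidden subgraphs of Lemma \ref{le4} to get $d(G)\le 3$ and then analysing each diameter separately---is sound and close in spirit to the paper, which instead splits on whether some non-adjacent pair has no common neighbour (yielding $K_s^t$) or every non-adjacent pair has exactly one (handled via a maximum independent set, yielding $K_n^{n_1,\ldots,n_k}$); your maximum-degree-vertex argument for diameter $2$ and BFS-level argument for diameter $3$ are legitimate alternatives, and the induced-subgraph identifications you make in the diameter-$2$ case ($H_3$/$H_2$ in the non-injective branch, $C_4$/$C_5$/$P_5$ in the injective branch) all check out. However, the diameter-$2$ argument has a hole: both branches need two distinct vertices of $B$, so the case $|B|=1$ is never treated ($f$ is then vacuously injective, but you cannot ``take distinct $u_1,u_2\in B$''). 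The case is easy to close---if $B=\{u\}$ then the only neighbour of $u$ is $w=f(u)$, and since $d_G(u,x)=2$ for every $x\in A\setminus\{w\}$ the unique common neighbour of $u$ and $x$ must be $w$, so $w$ is adjacent to all of $A\setminus\{w\}$ as well as to $v$ and $u$, giving $\deg(w)=|A|+1>\deg(v)$---but as written your proof does not cover it. (The worry you flag about $A\setminus\{w\}\neq\emptyset$ is harmless: diameter $2$ forces maximum degree at least $2$.)

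The diameter-$3$ sketch also omits needed steps. You never show that $V_3$ is independent, which is required for $\{u_0\}\cup V_3$ to consist of pendant vertices; an edge $z_1z_2$ inside $V_3$, whose endpoints attach to distinct $w_1\neq w_2\in V_2$, must be excluded via the induced $C_4$ on $\{z_1,w_1,w_2,z_2\}$. The parenthetical ``two neighbours would combine with $u_0$ to give $C_4$ or $H_1$'' cannot be literally right, since $u_0$ is adjacent to none of $z,w,w'$ while every vertex of $C_4$ and of $H_1$ has degree at least $2$ in it; the correct certificates use a $V_1$-neighbour of $w$ (giving $H_1$ if it also sees $w'$, and otherwise $H_3$ together with $u_0$). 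Finally, ``symmetrically $V_1=\{u_1\}$'' is not a genuine symmetry of your setup: a neighbour $a$ of $u_0$ need not lie at distance $2$ from $u_3$, so you must first rule out $d_G(a,u_3)=3$ (the set $\{u_3,u_2,u_1,u_0,a\}$ induces $P_5$ or $H_3$), which is exactly how the paper establishes $d_G(u)=d_G(v)=1$ at the start of its Case 1. All of these are fixable with the same six forbidden subgraphs, so the plan is viable, but as it stands it is incomplete at these points.
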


\begin{proof}
Let $\lambda_{2}(D(G))\leq\frac{17-\sqrt{329}}{2}$. Note that
$P_{5}$ is a forbidden subgraph of $G$, then $d(G)\leq 3$. $d(G)=1$
indicates that $G\cong K_{n}.$ Next we consider $d(G)\geq 2$, then
there exist non-adjacent vertices in $G$. For any two non-adjacent
vertices, we claim that they have at most one common neighbor.
Otherwise there exists forbidden subgraph $C_{4}$ or $H_{1}$. Next
we distinguish the following two cases:

{\bf Case $1$.}\ \ There exist two non-adjacent vertices $u$ and $v$
such that $|N_{G}(u)\cap N_{G}(v)|=0$.

Then $d_{G}(u,v)=3$. Let $P=uxyv$ be the shortest path between $u$
and $v$. Thus we have $d_{G}(u)=d_{G}(v)=1.$ Otherwise there exists
forbidden subgraph $P_{5}, H_{3},C_{4}$ or $H_{1}$.

Let $S=(N_{G}(x)\cup N_{G}(y))\backslash\{u,v\}$ and
$T=V(G)\backslash(S\cup \{u,v,x,y\})$.

We claim that $G[S\cup \{x,y\}]$ is a clique, otherwise there exists
forbidden subgraph $H_{2}$ or $H_{1}$. If $T=\emptyset$, then
$G\cong K_{s}^{2}$. If $T\neq \emptyset$, then we claim that each
vertex in $T$ is adjacent to some vertices in $S$. Otherwise the
distance from the vertex to $u$ or $v$ is at least 4, a
contradiction. Furthermore, we claim that each vertex in $T$ is
adjacent to exactly one vertex in $S$ and the vertices in $T$ have
no common neighbor in $S$. Otherwise there exists forbidden subgraph
$H_{1}$, $H_{2}$ or $H_{3}$. Moreover, by forbidden subgraph
$C_{4}$, $T$ is an independent set. Therefore $G\cong K_{s}^{t}$.

{\bf Case $2$.}\ \ For any two non-adjacent vertices, they have
exactly one common neighbor.

Let $S=\{v_{1},v_{2},\ldots,v_{k}\}$ be the maximum independent set
of $G$, where $k\geq 2$. Let $\bar{S}=V(G)\backslash S$. Then by the
definition of the maximum independent set, each vertex in $\bar{S}$
is adjacent to some vertices in $S$.

Let $v$ be the only common neighbor of $v_{1}$ and $v_{2}.$ We claim
that $v$ is adjacent to each vertex in $S.$ Otherwise there exists
forbidden subgraph $H_{2}, C_{5}, C_{4}$ or $H_{1}.$ Since any two
non-adjacent vertices of $G$ have exactly one common neighbor, then
each vertex in $\bar{S}\backslash\{v\}$ is adjacent to exactly one
vertex in $S$.

Let $V_{1}, V_{2}, \ldots, V_{k}$ be a partition of
$\bar{S}\backslash\{v\}$, where $V_{i}$ (possibly empty) is the
vertex subset whose vertices are all adjacent to $v_{i}.$ Next we
show that each vertex in $\bar{S}\backslash\{v\}$ is adjacent to
$v$. Without loss of generality, we consider the vertex subset
$V_{1}$. Let $v_{1}^{\star}\in V_{1}$. Suppose that $v_{1}^{\star}v
\notin E(G).$ Note that $v_{1}^{\star}$ is not adjacent to $v_{2},$
hence we can suppose that $w$ is the only common neighbor of
$v_{1}^{\star}$ and $v_{2}$. Then
$G[vv_{1}v_{1}^{\star}wv_{2}]=C_{5}$ or
$G[vv_{1}v_{1}^{\star}w]=C_{4}$, a contradiction.

Moreover, we claim that $G[V_{i}]~(i=1,2,\ldots,k)$ is a clique and
$E[V_{i}, V_{j}]=\emptyset~(i\neq j).$ Otherwise there exists
forbidden subgraph $H_{1}$. Thus we have $G\cong
K^{n_{1},n_{2},\ldots, n_{k}}_{n}$.\ \ $\Box$
\end{proof}

\section{Main results}

~~~~In this section, we will show that the graphs with
$\lambda_{2}(D(G))\leq\frac{17-\sqrt{329}}{2}$ are determined by
their $D$-spectra. First, we give the distance characteristic
polynomials of $K_{s}^{t}$ and $K^{n_{1},n_{2},\ldots, n_{k}}_{n}$.

\begin{lemma}\label{le11}
Let $G=K_{s}^{t},$ where $2\leq t\leq s$ and $n=s+t$. Then the
distance characteristic polynomial of $G$ is as follows.\\
For $s\geq t+1,$ $$P_{D}(\lambda)=(\lambda+1)^{s-t-1}(\lambda+2-\sqrt{2})^{t-1}(\lambda+2+\sqrt{2})^{t-1}[\lambda^{3}+(5-s-3t)\lambda^{2}+(6-4s-2t-st)\lambda+2-2s-st].$$
For $s=t,$
$$P_{D}(\lambda)=(\lambda+2-\sqrt{2})^{t-1}(\lambda+2+\sqrt{2})^{t-1}[\lambda^{2}+(4-4t)\lambda+2-2t-t^{2}].$$
\end{lemma}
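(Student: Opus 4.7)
The plan is to exploit the vertex symmetry of $K_s^t$ to block-decompose $D(G)$ via an equitable partition. Label the vertex set by three types: the $t$ clique vertices that carry a pendant (type $A$), the remaining $s-t$ clique vertices (type $B$), and the $t$ pendant vertices (type $C$), with each pendant $w_i\in C$ paired with its unique neighbour $u_i\in A$. A direct computation of distances yields the block form
\begin{equation*}
D(G)=\begin{pmatrix} J_t-I_t & J_{t,s-t} & 2J_t-I_t \\ J_{s-t,t} & J_{s-t}-I_{s-t} & 2J_{s-t,t} \\ 2J_t-I_t & 2J_{t,s-t} & 3(J_t-I_t) \end{pmatrix}.
\end{equation*}
Any permutation of $B$ is an automorphism, and any simultaneous permutation of $A$ and $C$ (carrying each $u_i$-$w_i$ pair to another such pair) is an automorphism; I will use this symmetry to split the spectrum into a ``generic'' part coming from vectors orthogonal to the partition and a ``quotient'' part coming from vectors constant on each class.

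First I would handle the orthogonal part. For a vector of the form $(0,x,0)^{\!\top}$ with $x\in\mathbb{R}^{s-t}$ and $\mathbf{1}^{\!\top}x=0$, the all-ones blocks annihilate $x$, leaving $D(G)(0,x,0)^{\!\top}=(0,-x,0)^{\!\top}$; this gives eigenvalue $-1$ with multiplicity $s-t-1$ (vacuous when $s=t$). For a vector $(a,0,c)^{\!\top}$ with $a,c\in\mathbb{R}^t$ and $\mathbf{1}^{\!\top}a=\mathbf{1}^{\!\top}c=0$, all $J$-blocks again annihilate, and the action reduces to multiplication of $\binom{a}{c}$ by the $2\times2$ matrix $\left(\begin{smallmatrix}-1 & -1\\ -1 & -3\end{smallmatrix}\right)$, whose characteristic polynomial $\lambda^2+4\lambda+2$ has roots $-2\pm\sqrt{2}$. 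Each such root therefore appears with multiplicity $t-1$. This produces the factor $(\lambda+1)^{s-t-1}(\lambda+2-\sqrt{2})^{t-1}(\lambda+2+\sqrt{2})^{t-1}$ (with the first factor dropping out when $s=t$).

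The remaining three (resp.\ two) eigenvalues come from the quotient matrix of the equitable partition $\{A,B,C\}$. Summing each block row gives
\begin{equation*}
B_Q=\begin{pmatrix} t-1 & s-t & 2t-1 \\ t & s-t-1 & 2t \\ 2t-1 & 2(s-t) & 3(t-1) \end{pmatrix}.
\end{equation*}
I would expand $\det(\lambda I-B_Q)$ along the first row and collect terms; the trace $s+3t-5$ already reproduces the claimed coefficient $5-s-3t$ of $\lambda^2$, which is a useful sanity check. Multiplying the quotient polynomial by the factor from the orthogonal part produces the stated formula. For the case $s=t$, the class $B$ is empty, so one simply deletes the middle row and column of $B_Q$ to obtain a $2\times2$ quotient with characteristic polynomial $\lambda^2+(4-4t)\lambda+2-2t-t^2$ and suppresses the $(\lambda+1)^{s-t-1}$ factor.

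The main obstacle is not conceptual but bookkeeping: verifying that the $2(t-1)+(s-t-1)+3$ eigenvalues account for all $n=s+t$ vertices (so that no spurious factor is hidden), and carrying out the $3\times3$ determinant expansion cleanly enough to match the stated constant term $2-2s-st$ and linear coefficient $6-4s-2t-st$. I would double-check by specialising to small $(s,t)$ such as $(2,2)$ and $(3,2)$ where $D(G)$ can be diagonalised by hand.
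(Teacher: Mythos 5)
Your proposal is correct, and the route is genuinely different in technique from the paper's, though both exploit the same symmetry of $K_s^t$. The paper orders the vertices so that each pendant is paired with its clique neighbour, writes $D$ in $2\times 2$ blocks $\sigma,\alpha,\beta,B$, and evaluates $\det(\lambda I-D)$ directly by block row/column operations, pulling out the factor $|\lambda I_2-\sigma+\alpha|^{t-1}$ (whose determinant is exactly your $\lambda^2+4\lambda+2$) and reducing the rest to a smaller determinant from which $(\lambda+1)^{s-t-1}$ and the cubic emerge. You instead use the three-class equitable partition $\{A,B,C\}$: explicit eigenvectors orthogonal to the partition give $-1$ with multiplicity $s-t-1$ and $-2\pm\sqrt{2}$ each with multiplicity $t-1$ (your block matrix, the invariance of the $(a,0,c)$ subspace, and the $2\times2$ action $\left(\begin{smallmatrix}-1&-1\\-1&-3\end{smallmatrix}\right)$ are all right), and the remaining factor is the characteristic polynomial of the $3\times 3$ quotient $B_Q$, which you correctly exhibit; the dimension count $2(t-1)+(s-t-1)+3=n$ guarantees nothing is missed, and since $\mathbb{R}^n$ splits into these invariant subspaces the characteristic polynomial is the product of the pieces even if eigenvalues coincide. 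The bookkeeping you defer does close: the trace gives $5-s-3t$, the sum of principal $2\times2$ minors of $B_Q$ is $6-4s-2t-st$ (with sign conventions, the $\lambda$-coefficient), and $\det(B_Q)=st+2s-2$, giving constant term $2-2s-st$; deleting the middle row and column when $s=t$ gives $\lambda^2+(4-4t)\lambda+2-2t-t^2$ as you state. What your approach buys is conceptual transparency (multiplicities and eigenvectors are explicit, and the quotient is only $3\times3$); the paper's block-determinant computation is more self-contained in that it never needs the equitable-partition/quotient machinery, only elementary determinant manipulations.
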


\begin{proof} For $s\geq t+1.$ The distance matrix of $K_{s}^{t}$ can be written as
\begin{equation*}\begin{split}
D&=\left(\begin{array}{ccccccc}
\sigma&\alpha&\cdots&\alpha&\beta\\
\alpha&\sigma&\cdots&\alpha&\beta\\
\vdots&\vdots&\ddots&\vdots&\vdots\\
\alpha&\alpha&\cdots&\sigma&\beta\\
\beta^{\top}&\beta^{\top}&\cdots&\beta^{\top}&B\\
\end{array}\right),\\
\end{split}\end{equation*}
where $\sigma=\left(\begin{array}{ccccccc}
0&1\\
1&0\\
\end{array}\right)$,
$\alpha=\left(\begin{array}{ccccccc}
3&2\\
2&1\\
\end{array}\right)$,
$\beta=\left(\begin{array}{ccccccc}
2&2&\cdots &2\\
1&1&\cdots &1\\
\end{array}\right)_{2\times (s-t)},$ and

$B=\left(\begin{array}{ccccccc}
0&1&\cdots &1\\
1&0&\cdots &1\\
\vdots &\vdots &\ddots &\vdots\\
1&1&\cdots &0\\
\end{array}\right)_{(s-t)\times (s-t)}.$

Thus
\begin{equation*}\medmuskip=0mu \begin{split}
&det(\lambda I-D)=\left|\begin{array}{ccccccc}
\lambda I_{2\times 2}-\sigma&-\alpha&\cdots&-\alpha&-\beta\\
-\alpha&\lambda I_{2\times 2}-\sigma &\cdots &-\alpha&-\beta\\
\vdots&\vdots&\ddots&\vdots&\vdots\\
-\alpha &-\alpha &\cdots &\lambda I_{2\times 2}-\sigma &-\beta\\
-\beta^{\top}&-\beta^{\top}&\cdots&-\beta^{\top}&\lambda I_{(s-t)\times (s-t)}-B\\
\end{array}\right|\\
&=\left|\begin{array}{cccccccccc}
\lambda I_{2\times 2}-\sigma-(t-1)\alpha&-\alpha&\cdots&-\alpha&-\beta\\
0&\lambda I_{2\times 2}-\sigma+\alpha &\cdots &0&0\\
\vdots&\vdots&\ddots&\vdots&\vdots\\
0&0&\cdots &\lambda I_{2\times 2}-\sigma+\alpha &0\\
-t\beta^{\top}&-\beta^{\top}&\cdots&-\beta^{\top}&\lambda I_{(s-t)\times (s-t)}-B\\
\end{array}\right|\\
&=|\lambda I_{2\times 2}-\sigma+\alpha |^{t-1}\left|\begin{array}{cccccccccc}
\lambda I_{2\times 2}-\sigma-(t-1)\alpha&-\beta\\
-t\beta^{\top}&\lambda I_{(s-t)\times (s-t)}-B\\
\end{array}\right|\\
&=(\lambda+1)^{s-t-1}(\lambda+2-\sqrt{2})^{t-1}(\lambda+2+\sqrt{2})^{t-1}[\lambda^{3}+(5-s-3t)\lambda^{2}+(6-4s-2t-st)\lambda+2-2s-st].
\end{split}\end{equation*}

For $s=t.$ The distance matrix of $K_{s}^{t}$ can be written as
\begin{equation*}\begin{split}
D&=\left(\begin{array}{ccccccc}
\sigma&\alpha&\cdots&\alpha\\
\alpha&\sigma&\cdots&\alpha\\
\vdots&\vdots&\ddots&\vdots\\
\alpha&\alpha&\cdots&\sigma\\
\end{array}\right).\\
\end{split}\end{equation*}

Thus
\begin{equation*}\medmuskip=0mu \begin{split}
&det(\lambda I-D)=\left|\begin{array}{ccccccc}
\lambda I_{2\times 2}-\sigma&-\alpha&\cdots&-\alpha\\
-\alpha&\lambda I_{2\times 2}-\sigma &\cdots &-\alpha\\
\vdots&\vdots&\ddots&\vdots\\
-\alpha &-\alpha &\cdots &\lambda I_{2\times 2}-\sigma \\
\end{array}\right|\\
&=\left|\begin{array}{cccccccccc}
\lambda I_{2\times 2}-\sigma-(t-1)\alpha&-\alpha&\cdots&-\alpha\\
0&\lambda I_{2\times 2}-\sigma+\alpha &\cdots&0\\
\vdots&\vdots&\ddots&\vdots\\
0&0&\cdots &\lambda I_{2\times 2}-\sigma+\alpha \\
\end{array}\right|\\
&=|\lambda I_{2\times 2}-\sigma+\alpha |^{t-1}
|\lambda I_{2\times 2}-\sigma-(t-1)\alpha|\\
&=(\lambda+2-\sqrt{2})^{t-1}(\lambda+2+\sqrt{2})^{t-1}[\lambda^{2}+(4-4t)\lambda+2-2t-t^{2}].
\end{split}\end{equation*}\ \ $\Box$
\end{proof}

\begin{corollary}\label{co22}
Let $G=K_{s}^{t},$ where $3\leq t+1\leq s$ and $n=s+t$. Then $\lambda_{2}(D(G))=\sqrt{2}-2.$
\end{corollary}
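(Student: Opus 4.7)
The plan is to apply Lemma \ref{le11} to isolate the ``easy'' eigenvalues of $D(G)$ and then do a sign analysis of the remaining cubic factor at two strategic points, thereby locating its three roots relative to the value $-2+\sqrt{2}$.

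By Lemma \ref{le11},
\[
P_D(\lambda) = (\lambda+1)^{s-t-1}(\lambda+2-\sqrt{2})^{t-1}(\lambda+2+\sqrt{2})^{t-1}\, f(\lambda),
\]
where $f(\lambda) = \lambda^3 + (5-s-3t)\lambda^2 + (6-4s-2t-st)\lambda + 2-2s-st$. So the spectrum of $D(G)$ consists of $-1$ (with multiplicity $s-t-1\ge 0$), $-2\pm\sqrt{2}$ (each with multiplicity $t-1\ge 1$), together with the three roots of $f$. Since $-2-\sqrt{2}<-1<-2+\sqrt{2}$, the largest of these ``easy'' eigenvalues is $-2+\sqrt{2}$, and it is actually attained. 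Hence it suffices to show that exactly one root of $f$ lies above $-2+\sqrt{2}$ (which must then be $\lambda_1$) while the other two lie below it.

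To this end I would evaluate $f$ at $\lambda=-1$ and at $\lambda=-2+\sqrt{2}$. A direct expansion yields $f(-1)=s-t$, which is positive by the hypothesis $s\ge t+1$. For the second point, using $(-2+\sqrt{2})^2=6-4\sqrt{2}$ and $(-2+\sqrt{2})^3=-20+14\sqrt{2}$ and collecting rational and $\sqrt{2}$-parts, $f(-2+\sqrt{2})$ simplifies to
\[
f(-2+\sqrt{2}) = t\bigl[(s-14)+(10-s)\sqrt{2}\bigr] = t\bigl[s(1-\sqrt{2})-(14-10\sqrt{2})\bigr].
\]
Since $1-\sqrt{2}<0$, this is negative exactly when $s>(14-10\sqrt{2})/(1-\sqrt{2})=6-4\sqrt{2}\approx 0.34$, which is vastly satisfied under the standing hypothesis $s\ge t+1\ge 3$.

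Combined with $f(-\infty)=-\infty$ and $f(+\infty)=+\infty$, the resulting sign pattern $(-,+,-,+)$ at the points $-\infty,-1,-2+\sqrt{2},+\infty$ forces (by the intermediate value theorem) exactly one real root of $f$ in each of $(-\infty,-1)$, $(-1,-2+\sqrt{2})$ and $(-2+\sqrt{2},+\infty)$. The last of these is necessarily the Perron eigenvalue $\lambda_1(D(G))$, while the other two roots of $f$ are strictly below $-2+\sqrt{2}$; together with the easy eigenvalues $-1$ and $-2\pm\sqrt{2}$, every eigenvalue other than $\lambda_1$ is thus $\le -2+\sqrt{2}$, and equality is attained by $-2+\sqrt{2}$ with multiplicity $t-1\ge 1$. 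Therefore $\lambda_2(D(G))=-2+\sqrt{2}=\sqrt{2}-2$. The one genuinely delicate step is the surd computation of $f(-2+\sqrt{2})$; the root-location via sign changes and the final comparison are elementary.
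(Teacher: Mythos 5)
Your proposal is correct and follows essentially the same route as the paper: factor $P_D(\lambda)$ via Lemma \ref{le11}, then locate the three roots of the cubic $f$ by a sign/intermediate-value analysis so that exactly one eigenvalue exceeds $\sqrt{2}-2$, which is attained with multiplicity $t-1\geq 1$. The only difference is the choice of test points — the paper evaluates $f$ at the rational points $0$, $-\tfrac{2}{3}$, $-1$ (placing the middle root in $(-1,-\tfrac{2}{3})$, hence below $\sqrt{2}-2$) and thereby avoids surd arithmetic, whereas you compute $f(-2+\sqrt{2})=t\bigl[(s-14)+(10-s)\sqrt{2}\bigr]$ directly, which I checked and is correct.
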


\begin{proof}
By Lemma \ref{le11},
$$P_{D}(\lambda)=(\lambda+1)^{s-t-1}(\lambda+2-\sqrt{2})^{t-1}(\lambda+2+\sqrt{2})^{t-1}[\lambda^{3}+(5-s-3t)\lambda^{2}+(6-4s-2t-st)\lambda+2-2s-st].$$
Let $f(\lambda)=\lambda^{3}+(5-s-3t)\lambda^{2}+(6-4s-2t-st)\lambda+2-2s-st.$
By a
simple calculation,
\begin{equation*}\begin{split}
\begin{cases}
f(0)=2-2s-st<0,\\
f(-\frac{2}{3})=-\frac{2}{27}+\frac{2}{9}s-\frac{1}{3}st<0,\\
f(-1)=s-t>0.
\end{cases}
\end{split}\end{equation*}
Note that $f(\lambda)\rightarrow +\infty ~(\lambda\rightarrow +\infty)$ and
$f(0)<0$, so there is at least one root in $(0,+\infty)$. Since
$f(-\frac{2}{3})<0$ and $f(-1)>0$, then there is at least one root
in $(-1,-\frac{2}{3})$. By $f(\lambda)\rightarrow -\infty ~(\lambda\rightarrow
-\infty)$ and $f(-1)>0$, so there is at least one root in
$(-\infty,-1)$. Thus there is exactly one root in each interval. So $\lambda_{2}(D(G))=\sqrt{2}-2.$ \ \ $\Box$
\end{proof}

\begin{theorem}\label{th3}
No two non-isomorphic graphs in $\mathbb{K}_{s}^{t}$ are
$D$-cospectral.
\end{theorem}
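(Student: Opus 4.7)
The plan is to read off the parameter $t$ directly from the distance spectrum of $K_s^t$ by tracking the multiplicity of a single conveniently chosen eigenvalue. Suppose $K_{s_1}^{t_1}$ and $K_{s_2}^{t_2}$ in $\mathbb{K}_s^t$ are $D$-cospectral; they then have the same order, so $s_1+t_1 = s_2+t_2 = n$. It suffices to show $t_1=t_2$, because then $s_i = n-t_i$ forces $s_1=s_2$ and the graphs are isomorphic.

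By Lemma \ref{le11}, in both subcases ($s>t$ and $s=t$) the characteristic polynomial $P_D(\lambda)$ of $K_s^t$ contains $(\lambda+2+\sqrt{2})^{t-1}$ as a factor, the complementary factor being either the cubic $f(\lambda)=\lambda^{3}+(5-s-3t)\lambda^{2}+(6-4s-2t-st)\lambda+(2-2s-st)$ or the quadratic $g(\lambda)=\lambda^{2}+(4-4t)\lambda+(2-2t-t^{2})$. The key step is to substitute $\lambda=-2-\sqrt{2}$ into $f$ and $g$ and separate rational and $\sqrt{2}$ parts. A direct calculation (using $(-2-\sqrt{2})^2=6+4\sqrt{2}$ and $(-2-\sqrt{2})^3=-20-14\sqrt{2}$) gives
\[
f(-2-\sqrt{2})=t(s-14)+t(s-10)\sqrt{2},\qquad g(-2-\sqrt{2})=t(6-t)+4t\sqrt{2}.
\]
Since $\{1,\sqrt{2}\}$ is a $\mathbb{Q}$-basis of $\mathbb{Q}(\sqrt{2})$, these vanish only if both coefficients do. For $g$ the coefficient $4t$ is nonzero for $t\ge 2$. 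For $f$ the vanishing would require $s=14$ and $s=10$ simultaneously, which is impossible; equivalently, if $s=10$ then the rational part is $-4t\neq 0$, and if $s=14$ then the irrational part is $4t\neq 0$.

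Consequently $-2-\sqrt{2}$ is not a root of the residual factor, so its multiplicity in the distance spectrum of $K_s^t$ equals exactly $t-1$. Applying this to both $K_{s_1}^{t_1}$ and $K_{s_2}^{t_2}$ forces $t_1-1=t_2-1$, i.e.\ $t_1=t_2$, which combined with $s_i+t_i=n$ yields $s_1=s_2$ and finishes the proof. The only real obstacle is the short algebraic verification that $-2-\sqrt{2}$ is not absorbed into the cubic or quadratic residue; this is essentially bookkeeping in $\mathbb{Q}(\sqrt{2})$ and is made painless by the common factor $t\ge 2$ appearing in both the rational and irrational parts of the evaluations above.
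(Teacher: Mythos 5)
Your proof is correct and follows essentially the same route as the paper: the paper also deduces $t_1=t_2$ from the factorization in Lemma \ref{le11}, which amounts to reading off $t$ from the multiplicity of the eigenvalue $-2-\sqrt{2}$. In fact you are more careful than the paper, since you explicitly verify (via the $\mathbb{Q}(\sqrt{2})$ computation $f(-2-\sqrt{2})=t(s-14)+t(s-10)\sqrt{2}$ and $g(-2-\sqrt{2})=t(6-t)+4t\sqrt{2}$) that this eigenvalue is not absorbed by the cubic or quadratic factor, a point the paper leaves implicit.
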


\begin{proof}
For any $K_{s_{1}}^{t_{1}}, K_{s_{2}}^{t_{2}}\in
\mathbb{K}_{s}^{t}.$ Suppose that they are $D$-cospectral, by
Lemma \ref{le11}, then $t_{1}=t_{2}.$ Note that
$s_{1}+t_{1}=s_{2}+t_{2}=n$, thus $s_{1}=s_{2},$ that is,
$K_{s_{1}}^{t_{1}}\cong K_{s_{2}}^{t_{2}}$. \ \ $\Box$
\end{proof}

\begin{lemma}\label{le12}
Let $G=K^{n_{1},n_{2},\ldots, n_{k}}_{n}$, where
$n=\sum_{i=1}^{k}n_{i}+1$ and $k\geq2.$ Then the distance
characteristic polynomial of $G$ is
$$P_{D}(\lambda)=(\lambda+1)^{n-k-1}(\lambda-\sum_{i=1}^{k}\frac{n_{i}(2\lambda+1)}{\lambda+n_{i}+1})\prod_{i=1}^{k}(\lambda+n_{i}+1).$$
\end{lemma}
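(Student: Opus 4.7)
The plan is to exploit the equitable partition $V(G) = V_1 \cup V_2 \cup \cdots \cup V_k \cup \{v\}$, where $V_i = V(K_{n_i})$ is the $i$-th clique of $G - v$. By the definition of $K^{n_1,\ldots,n_k}_n$, $d_G(x,y) = 1$ when $x, y$ lie in the same $V_i$ or when one of them equals $v$, and $d_G(x,y) = 2$ when $x \in V_i$, $y \in V_j$ with $i \ne j$. Consequently, every row of $D(G)$ indexed by a vertex of $V_i$ has constant sum on each block, so the partition is equitable and $\mathbb{R}^n$ splits into two $D(G)$-invariant subspaces.

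First I would collect the eigenspace for $\lambda = -1$. For each $i$ and each $x \in \mathbb{R}^{n_i}$ with $\mathbf{1}^{\top} x = 0$, extend $x$ by zero to a vector $\tilde x \in \mathbb{R}^n$. Within $V_i$, $D(G)$ acts on $\tilde x$ as $(J_{n_i} - I_{n_i})x = -x$; at any vertex outside $V_i$, the row of $D(G)$ restricted to $V_i$ is a constant multiple of $\mathbf{1}^{\top}$ (either $2\mathbf{1}^{\top}$ or $\mathbf{1}^{\top}$), which annihilates $x$. Hence $D(G)\tilde x = -\tilde x$, and this yields $\sum_{i=1}^{k}(n_i - 1) = n - k - 1$ linearly independent eigenvectors for eigenvalue $-1$, contributing the factor $(\lambda + 1)^{n-k-1}$.

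Next I would analyze the complementary, $(k+1)$-dimensional invariant subspace, namely the block-constant vectors parametrized by $(c_1, \ldots, c_k, c_0)$, with value $c_i$ on $V_i$ and $c_0$ at $v$. The eigenvalue equation $D(G) y = \lambda y$ reduces to
$$(n_i - 1) c_i + 2\sum_{j \ne i} n_j c_j + c_0 = \lambda c_i \;\; (i = 1, \ldots, k), \qquad \sum_{j=1}^{k} n_j c_j = \lambda c_0.$$
Rewriting the first family as $(\lambda + n_i + 1)c_i = 2\sum_j n_j c_j + c_0$ and substituting $\sum_j n_j c_j = \lambda c_0$ gives $(\lambda + n_i + 1)c_i = (2\lambda + 1) c_0$. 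Solving for $c_i$ and plugging back into $\sum_j n_j c_j = \lambda c_0$ produces the secular equation $\lambda = \sum_i n_i(2\lambda+1)/(\lambda + n_i + 1)$. Clearing denominators by $\prod_i(\lambda + n_i + 1)$ yields the factor $\left(\lambda - \sum_i \frac{n_i(2\lambda+1)}{\lambda+n_i+1}\right)\prod_i(\lambda + n_i + 1)$, a polynomial of degree $k + 1$ in $\lambda$.

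Multiplying the two factors, whose degrees add up to $(n-k-1) + (k+1) = n$, gives the claimed formula for $P_D(\lambda)$. The main technical point is the factorization of $\det(\lambda I - D(G))$ into the contributions from the two invariant subspaces; this is guaranteed by the equitable-partition theorem, and can alternatively be verified by a block orthogonal similarity on $D(G)$ that, within each $V_i$, singles out the all-ones direction, bringing $D(G)$ into the block-diagonal form $\mathrm{diag}(B, -I_{n-k-1})$, where $B$ is the $(k+1) \times (k+1)$ quotient matrix of the partition.
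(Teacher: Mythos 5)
Your proposal is correct, and in substance it is the paper's computation recast in eigenvector language: the paper extracts the factor $(\lambda+1)^{n_i-1}$ from each clique block of $\det(\lambda I-D)$ (this is exactly your family of zero-sum vectors supported on $V_i$), and is then left with the $(k+1)\times(k+1)$ quotient determinant, which it evaluates by subtracting twice its last row from each of the first $k$ rows (this is exactly your substitution of $\sum_j n_jc_j=\lambda c_0$ into the block equations). The one step you state too loosely is the identification of the degree-$(k+1)$ factor: from ``every eigenvalue on the block-constant subspace satisfies the secular equation'' one cannot immediately conclude that the characteristic polynomial of the quotient matrix $B$ equals the cleared-denominator polynomial $q(\lambda)=\bigl(\lambda-\sum_{i}\frac{n_i(2\lambda+1)}{\lambda+n_i+1}\bigr)\prod_{i}(\lambda+n_i+1)$, since multiplicities are not accounted for and the secular equation, read as a rational equation, misses eigenvalues sitting at its poles: if $n_i=n_j$ for some $i\neq j$, then $-(n_i+1)$ is an eigenvalue of $B$ (take $c_0=0$, $n_ic_i+n_jc_j=0$, all other coordinates zero), yet it is not a solution of the secular equation, only a root of $q(\lambda)$; your derivation of $c_i=(2\lambda+1)c_0/(\lambda+n_i+1)$ also tacitly assumes $c_0\neq 0$ and $\lambda\neq-(n_i+1)$. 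The repair is immediate and turns your manipulation into a determinant identity: adding twice the last equation of $(\lambda I-B)c=0$ to each of the first $k$ equations is an invertible row operation, producing a matrix whose $i$-th row is $(\lambda+n_i+1)e_i^{\top}$ bordered by $-(2\lambda+1)$ in the last column, with last row $(-n_1,\ldots,-n_k,\lambda)$; its determinant equals $\det(\lambda I-B)$ and expands directly to $q(\lambda)$, which is precisely the computation in the paper's proof. With that adjustment your argument is complete and matches the stated formula, the degrees adding to $(n-k-1)+(k+1)=n$ as you note.
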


\begin{proof}
The distance matrix of $G$ has the form
\begin{equation*}\begin{split}
D&=\left(\begin{array}{ccccccc}
J_{n_{1}}-I_{n_{1}}&2J_{n_{1}\times n_{2}}&\cdots&2J_{n_{1}\times n_{k}}&J_{n_{1}\times 1}\\
2J_{n_{2}\times n_{1}}&J_{n_{2}}-I_{n_{2}}&\cdots&2J_{n_{2}\times n_{k}}&J_{n_{2}\times 1}\\
\vdots&\vdots&\ddots&\vdots&\vdots\\
2J_{n_{k}\times n_{1}}&2J_{n_{k}\times n_{2}}&\cdots&J_{n_{k}}-I_{n_{k}}&J_{n_{k}\times 1}\\
J_{1\times n_{1}}&J_{1\times n_{2}}&\cdots&J_{1\times n_{k}}&0\\
\end{array}\right),\\
\end{split}\end{equation*}
where $J_{n_{i}}$ is the all-one square matrix with order $n_{i}$,
$I_{n_{i}}$ is the identity square matrix with order $n_{i}$, and
$J_{n_{i}\times n_{j}}~(i\neq j)$ is the all-one matrix with $n_{i}$
rows and $n_{j}$ columns.

Then
\begin{equation*}\medmuskip=0mu \begin{split}
&det(\lambda I-D)=(\lambda+1)^{n_{1}-1}(\lambda+1)^{n_{2}-1}\cdots (\lambda+1)^{n_{k}-1}\left|\begin{array}{ccccccc}
\lambda -(n_{1}-1)&-2n_{2}&\cdots&-2n_{k}&-1\\
-2n_{1}&\lambda -(n_{2}-1) &\cdots &-2n_{k}&-1\\
\vdots&\vdots&\ddots&\vdots&\vdots\\
-2n_{1}&-2n_{2} &\cdots &\lambda -(n_{k}-1)&-1\\
-n_{1}&-n_{2} &\cdots &-n_{k}&\lambda\\
\end{array}\right|\\
&=(\lambda+1)^{\sum_{i=1}^{k} n_{i}-k}\left|\begin{array}{cccccccccc}
\lambda +n_{1}+1&0&\cdots&0&-1-2\lambda\\
0&\lambda +n_{2}+1&\cdots&0&-1-2\lambda\\
\vdots&\vdots&\ddots&\vdots&\vdots\\
0&0&\cdots&\lambda +n_{k}+1&-1-2\lambda\\
-n_{1}&-n_{2} &\cdots &-n_{k}&\lambda\\
\end{array}\right|\\
&=(\lambda+1)^{n-k-1}(\lambda-\sum_{i=1}^{k}\frac{n_{i}(2\lambda+1)}{\lambda+n_{i}+1})\prod_{i=1}^{k}(\lambda+n_{i}+1).
\end{split}
\end{equation*}\ \ \ $\Box$
 \end{proof}

\begin{theorem}\label{th10}
No two non-isomorphic graphs in $\mathbb{K}^{n_{1},n_{2},\ldots,
n_{k}}_{n}$ are $D$-cospectral.
\end{theorem}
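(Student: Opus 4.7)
The plan is to leverage Lemma~\ref{le12} to show that the distance characteristic polynomial uniquely determines both the integer $k$ and the multiset $\{n_1,\ldots,n_k\}$, and hence the isomorphism class of the graph. The key reduction step will be to recast the rational factor in Lemma~\ref{le12} in terms of $A(\lambda):=\prod_{i=1}^{k}(\lambda+n_i+1)$ and its derivative, so that cospectrality becomes a first-order linear identity in $A$.

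First I would write $P_D(\lambda)=(\lambda+1)^{n-k-1}g(\lambda)$, where $g$ is a polynomial of degree $k+1$. The elementary identity
$$\sum_{i=1}^{k}\frac{n_i}{\lambda+n_i+1}=k-(\lambda+1)\,\frac{A'(\lambda)}{A(\lambda)},$$
which follows from $n_i=(\lambda+n_i+1)-(\lambda+1)$ term by term, will let me rewrite
$$g(\lambda)=\bigl[(1-2k)\lambda-k\bigr]A(\lambda)+(2\lambda+1)(\lambda+1)A'(\lambda).$$
Substituting $\lambda=-1$ gives $g(-1)=(k-1)\prod_{i=1}^{k}n_i$, which is nonzero since $k\geq 2$ and each $n_i\geq 1$. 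Hence $-1$ is a root of $P_D$ with multiplicity exactly $n-k-1$, so any $D$-cospectral mate must share the same $n$ (from $\deg P_D$) and the same $k$ (from this multiplicity).

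Next, suppose $G_1,G_2\in\mathbb{K}^{n_1,\ldots,n_k}_n$ are $D$-cospectral; writing $C(\lambda):=A_1(\lambda)-A_2(\lambda)$ and subtracting the two displayed expressions for $g$ yields
$$(2\lambda+1)(\lambda+1)\,C'(\lambda)=\bigl[(2k-1)\lambda+k\bigr]C(\lambda).$$
Since $A_1,A_2$ are both monic of degree $k$, we have $\deg C\leq k-1$. I expect the short but decisive step to be ruling out $C\not\equiv 0$: if $\deg C=d\geq 1$, comparing the leading coefficients of both sides forces $2d=2k-1$, which is impossible by parity; if $d=0$, the left side vanishes while the right side is a nonzero polynomial of degree $1$, again forcing $C=0$. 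The conclusion $A_1\equiv A_2$ identifies the multisets $\{n_i\}$ and $\{n_i'\}$ via their roots, so $G_1\cong G_2$.

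The main obstacle really lies in the algebraic rewriting in step two: once $(2\lambda+1)(\lambda+1)$ appears in front of $A'$, the parity mismatch in $2k-1$ kills the differential identity immediately. A direct attack via elementary symmetric functions of $n_1+1,\ldots,n_k+1$, or via the individual distance eigenvalues, would be considerably more tedious.
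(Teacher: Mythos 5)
Your proof is correct, and it takes a genuinely different route from the paper. The paper expands the degree-$(k+1)$ factor $g(\lambda,n_1,\ldots,n_k)$ into coefficients $\mu_i$, shows each $\mu_i$ equals $t_is_i+t_{i-1}s_{i-1}+\cdots+t_1s_1+t_0$ with $t_i=1-2i\neq 0$, where $s_1,\ldots,s_k$ are the elementary symmetric functions of $n_1,\ldots,n_k$, and then recovers $s_1,\ldots,s_k$ recursively from $\mu_1,\ldots,\mu_k$, identifying the multiset $\{n_i\}$ via Vieta. You instead use the logarithmic-derivative identity to write $g(\lambda)=\bigl[(1-2k)\lambda-k\bigr]A(\lambda)+(2\lambda+1)(\lambda+1)A'(\lambda)$ with $A(\lambda)=\prod_{i}(\lambda+n_i+1)$, so that cospectrality of two members with the same $k$ forces $(2\lambda+1)(\lambda+1)C'=\bigl[(2k-1)\lambda+k\bigr]C$ for $C=A_1-A_2$ with $\deg C\leq k-1$, which the leading-coefficient comparison ($2d=2k-1$, impossible) and the constant case kill at once; this avoids the symmetric-function bookkeeping entirely. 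Both arguments share the preliminary step that the multiplicity of the eigenvalue $-1$ pins down $k$; here your version is actually tighter than the paper's, since your computation $g(-1)=(k-1)\prod_{i=1}^{k}n_i\neq 0$ certifies that the multiplicity is \emph{exactly} $n-k-1$, a point the paper's proof asserts from Lemma~\ref{le12} without checking that the remaining factor does not vanish at $-1$ (and which is needed to conclude $k=k^{\star}$ from cospectrality). What the paper's approach buys is an explicit recovery of all the symmetric functions $s_i$ from the spectrum; what yours buys is brevity and a cleaner structural reason (the parity obstruction in the differential identity) for why the family is spectrally rigid.
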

\begin{proof}
Let $G=K^{n_{1},n_{2},\ldots, n_{k}}_{n}$. By Lemma \ref{le12},
$-1$ is the eigenvalue of $G$ with multiplicity $n-k-1.$ Suppose
that $G^{\star}\in\mathbb{K}^{n_{1},n_{2},\ldots,n_{k}}_{n}$ is
$D$-cospectral to $G,$ then $-1$ is also the eigenvalue of
$G^{\star}$ with multiplicity $n-k-1$. Hence we may assume that
$G^{\star}=K^{n_{1}^{\star},n_{2}^{\star},\ldots,
n_{k}^{\star}}_{n}$ ($k$ parts) with
$\sum_{i=1}^{k}n_{i}^{\star}=n-1$. Let us denote the rest of
common distance eigenvalues of $G$ and $G^{\star}$ by $\lambda_{1},
\lambda_{2},\ldots, \lambda_{k+1}.$ Let
\begin{equation*}\begin{split}
g(\lambda,n_{1},n_{2},\ldots,n_{k})=&(\lambda-\sum_{i=1}^{k}\frac{n_{i}(2\lambda+1)}{\lambda+n_{i}+1})\prod_{i=1}^{k}(\lambda+n_{i}+1)\\
=&\lambda\prod_{i=1}^{k}(\lambda+n_{i}+1)-(2\lambda+1)\sum_{i=1}^{k}n_{i}\prod_{j=1,j\neq
i}^{k}(\lambda+n_{j}+1)\\
=&\sum_{i=0}^{k+1}\mu_{i}\lambda^{k+1-i},
\end{split}\end{equation*} then $\lambda_{1},
\lambda_{2},\ldots, \lambda_{k+1}$ are the roots of
$g(\lambda,n_{1},n_{2},\ldots,n_{k}).$

Denote
$s_{1}=\sum_{j=1}^{k}n_{j},
s_{2}=\sum_{1\leq j_{1}<j_{2} \leq k}n_{j_{1}}n_{j_{2}},\ldots,
s_{i}=\sum_{1\leq j_{1}< j_{2}<\cdots< j_{i}\leq k}n_{j_{1}}n_{j_{2}}\cdots n_{j_{i}},\ldots,\\ s_{k}=n_{1}n_{2}\cdots n_{k}.$
Obviously, $\mu_{0}=1$ and
$\mu_{1}=-\sum_{i=1}^{k}n_{i}+k=-s_{1}+k.$
For $2\leq i\leq k$, by the calculation, we have
\begin{equation*}\medmuskip=0mu\begin{split}
\mu_{i}=&\sum_{1\leq j_{1}<j_{2}<\cdots<j_{i}\leq k}(n_{j_{1}}+1)\cdots(n_{j_{i}}+1)-2\sum_{l=1}^{k}n_{l}\sum_{1\leq j_{1}<j_{2}<\cdots<j_{i-1}\leq k,l\not\in\{j_{1},\ldots,j_{i-1}\}}(n_{j_{1}}+1)\cdots(n_{j_{i-1}}+1)\\
&-\sum_{l=1}^{k}n_{l}\sum_{1\leq j_{1}<j_{2}<\cdots<j_{i-2}\leq k,l\not\in\{j_{1},\ldots,j_{i-2}\}}(n_{j_{1}}+1)\cdots(n_{j_{i-2}}+1)\\
=&t_{i}s_{i}+t_{i-1}s_{i-1}+t_{i-2}s_{i-2}+\cdots+t_{1}s_{1}+t_{0},
\end{split}
\end{equation*}
where $t_{i}=1-2i,$ and
$\mu_{k+1}=-ks_{k}-(k-1)s_{k-1}-\cdots-2s_{2}-s_{1}.$

Define $s_{1}^{\star}=\sum_{j=1}^{k}n_{j}^{\star},
s_{2}^{\star}=\sum_{1\leq j_{1}<j_{2} \leq k}n_{j_{1}}^{\star}n_{j_{2}}^{\star},\ldots,
s_{i}^{\star}=\sum_{1\leq j_{1}< j_{2}<\cdots< j_{i}\leq k}n_{j_{1}}^{\star}n_{j_{2}}^{\star}\cdots n_{j_{i}}^{\star},\ldots,\\ s_{k}^{\star}=n_{1}^{\star}n_{2}^{\star}\cdots n_{k}^{\star}.$ Let
\begin{equation*}\begin{split}
g(\lambda,n_{1}^{\star},n_{2}^{\star},\ldots,n_{k}^{\star})=&(\lambda-\sum_{i=1}^{k}\frac{n_{i}^{\star}(2\lambda+1)}{\lambda+n_{i}^{\star}+1})\prod_{i=1}^{k}(\lambda+n_{i}^{\star}+1)\\
=&\lambda\prod_{i=1}^{k}(\lambda+n_{i}^{\star}+1)-(2\lambda+1)\sum_{i=1}^{k}n_{i}^{\star}\prod_{j=1,j\neq
i}^{k}(\lambda+n_{j}^{\star}+1)\\
=&\sum_{i=0}^{k+1}\mu_{i}^{\star}\lambda^{k+1-i}.
\end{split}\end{equation*} Similar to the calculation of $\mu_{i}$,
then
$$\mu_{i}^{\star}=t_{i}s_{i}^{\star}+t_{i-1}s_{i-1}^{\star}+t_{i-2}s_{i-2}^{\star}+\cdots+t_{1}s_{1}^{\star}+t_{0},~~~~~ i=1,2,\ldots,k.$$

Note that $\lambda_{1}, \lambda_{2},\ldots, \lambda_{k+1}$ are also
the roots of
$g(\lambda,n_{1}^{\star},n_{2}^{\star},\ldots,n_{k}^{\star}),$ hence
$\mu_{i}=\mu_{i}^{\star}.$  Recursively, we can obtain that
$s_{1}=s_{1}^{\star}, s_{2}=s_{2}^{\star},\ldots,
s_{k}=s_{k}^{\star}.$ Since $n_{1},n_{2},\ldots,n_{k}$ are the roots
of $x^{k}-s_{1}x^{k-1}+s_{2}x^{k-2}+\cdots+(-1)^{k}s_{k}=0,$ then
$n_{1}^{\star},n_{2}^{\star},\ldots,n_{k}^{\star}$ are also the
roots of the same polynomial, and hence
$n_{1}^{\star},n_{2}^{\star},\ldots,n_{k}^{\star}$ must be a
permutation of $n_{1},n_{2},\ldots,n_{k}.$ So $G^{\star}\cong G $. \
\ $\Box$
\end{proof}

\begin{theorem}\label{th5}
For any $G\in \mathbb{K}_{s}^{t}$ and
$\tilde{G}\in\mathbb{K}^{n_{1},n_{2},\ldots, n_{k}}_{n}$, then they
are not $D$-cospectral.
\end{theorem}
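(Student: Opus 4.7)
My plan is to suppose $G \in \mathbb{K}_s^t$ and $\tilde{G}=K^{n_1,\ldots,n_k}_n$ are $D$-cospectral and derive a contradiction using two pieces of data: the multiplicity of the eigenvalue $-1$, and the evaluation at $\lambda=-1$ of the non-$(-1)$ factor of the distance characteristic polynomial.

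First, I would pin down the multiplicities of $-1$ in each spectrum. Writing the characteristic polynomial of $D(\tilde{G})$ as $(\lambda+1)^{n-k-1}p(\lambda)$ via Lemma \ref{le12}, a direct substitution yields $p(-1)=(k-1)\prod_{i=1}^k n_i>0$, so the multiplicity is exactly $n-k-1$. For $G$, Lemma \ref{le11} gives multiplicity $s-t-1$ when $s \geq t+1$ (since the cubic factor $f$ satisfies $f(-1)=s-t\neq 0$) and multiplicity $0$ when $s=t$. Matching multiplicities forces either $k=n-1=2t-1$ (if $s=t$) or $k=2t$ (if $s > t$).

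In the subcase $s=t$, the constraints $\sum n_i=n-1=k$ and $n_i\geq 1$ force all $n_i=1$, so $\tilde{G}$ is the star $K_{1,n-1}$; its bracket factors as $(\lambda+2)^{k-1}\bigl[\lambda(\lambda+2)-k(2\lambda+1)\bigr]$, so $-2$ is an eigenvalue of $D(\tilde{G})$ with multiplicity $2t-2\geq 2$. By contrast, Lemma \ref{le11} shows the eigenvalues of $D(K_t^t)$ are $-2\pm\sqrt{2}$ (each with multiplicity $t-1$) together with the roots of $\lambda^2+(4-4t)\lambda+2-2t-t^2$; substituting $\lambda=-2$ into this quadratic gives $-(t^2-6t+2)\neq 0$ for every integer $t\geq 2$, so $-2$ is not an eigenvalue of $D(G)$, a contradiction.

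In the main subcase $s\geq t+1$, $k=2t$, equating the characteristic polynomials of Lemmas \ref{le11} and \ref{le12} and cancelling the common factor $(\lambda+1)^{s-t-1}$ gives the polynomial identity $p(\lambda)=(\lambda^2+4\lambda+2)^{t-1}f(\lambda)$. Evaluating both sides at $\lambda=-1$ produces
\[
(2t-1)\prod_{i=1}^{2t}n_i = (-1)^{t-1}(s-t).
\]
If $t$ is even, the right side is negative while the left is positive, which is an immediate contradiction. If $t$ is odd, I would write $n_i=1+a_i$ with $a_i\geq 0$ and $\sum a_i = (n-1)-2t = s-t-1$, then apply the elementary inequality $\prod(1+a_i)\geq 1+\sum a_i = s-t$ and combine with the displayed equation to conclude $(2t-1)(s-t)\leq s-t$, forcing $t\leq 1$ and contradicting $t\geq 2$. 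I expect the most care-demanding step to be organizing the multiplicity-of-$-1$ comparison cleanly enough that the polynomial identity $p=(\lambda^2+4\lambda+2)^{t-1}f$ is truly forced; after that, the $\lambda=-1$ evaluation together with the expansion $\prod(1+a_i)\geq 1+\sum a_i$ disposes of both parities of $t$ with no heavy computation.
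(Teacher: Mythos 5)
Your proof is correct, but in the decisive case it takes a genuinely different route from the paper. Both arguments begin the same way: match the multiplicity of the eigenvalue $-1$ (exact because $f(-1)=s-t\neq 0$ and $p(-1)=(k-1)\prod_{i}n_{i}>0$) to force $k=2t$ when $s\geq t+1$ and $k=n-1$ when $s=t$, and your treatment of the $s=t$ subcase (star $K_{1,n-1}$ has $-2$ as an eigenvalue while $K_{t}^{t}$ does not, since $t^{2}-6t+2\neq 0$ for integers) is essentially the paper's. In the main subcase $s\geq t+1$, however, the paper proceeds by computing $\mathrm{trace}(D^{2})$ for both graphs to get $6(m-\tilde m)=5t(t-1)$, hence $6\mid t(t-1)$, then invokes Corollary \ref{co22} ($\lambda_{2}(D(K_{s}^{t}))=\sqrt{2}-2$) together with interlacing against $K_{1,6}$ to force $k<6$, so $k=2t=4$, i.e.\ $t=2$, contradicting $6\mid t(t-1)$. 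You instead cancel $(\lambda+1)^{s-t-1}$ from the equal characteristic polynomials and evaluate the resulting identity $p(\lambda)=(\lambda^{2}+4\lambda+2)^{t-1}f(\lambda)$ at $\lambda=-1$, obtaining $(2t-1)\prod_{i=1}^{2t}n_{i}=(-1)^{t-1}(s-t)$; the sign kills even $t$, and for odd $t$ the bound $\prod_{i}(1+a_{i})\geq 1+\sum_{i}a_{i}=s-t$ forces $2t-1\leq 1$, a contradiction. Your route is more self-contained: it needs no distance counting, no numerical eigenvalue estimates, and neither Corollary \ref{co22} nor the interlacing Lemma \ref{le2}, whereas the paper's argument leans on those structural spectral facts; both are valid, and yours arguably yields a cleaner, purely algebraic contradiction. (Minor polish: when you say the multiplicity of $-1$ in $D(K_{t}^{t})$ is $0$, note explicitly that the quadratic factor at $\lambda=-1$ equals $-(t-1)^{2}\neq 0$.)
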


\begin{proof}
We may assume that $G=K_{s}^{t}$ and
$\tilde{G}=K^{n_{1},n_{2},\ldots, n_{k}}_{n}$ with $m$ and
$\tilde{m}$ edges, respectively. Suppose that $G$ and $\tilde{G}$
have the same distance spectrum
$\lambda_{1}(D)\geq\lambda_{2}(D)\geq\cdots\geq\lambda_{n}(D)$. Let
$D(G)=(d_{ij})$ and $D(\tilde{G})=(\tilde{d_{ij}})$ be the distance
matrices of $G$ and $\tilde{G}$, respectively. Then we have
$$\sum_{l=1}^{n}\lambda_{l}^{2}=\sum_{i=1}^{n}\sum_{j=1}^{n}d_{ij}^{2}=2[m+(\frac{n(n-1)}{2}-m)\times 4+\frac{t(t-1)}{2}\times 5]=4n(n-1)-6m+5t(t-1)$$
and
$$\sum_{l=1}^{n}\lambda_{l}^{2}=\sum_{i=1}^{n}\sum_{j=1}^{n}\tilde{d_{ij}}^{2}=2[\tilde{m}+(\frac{n(n-1)}{2}-\tilde{m})\times 4]=4n(n-1)-6\tilde{m}.$$
Hence $6m-5t(t-1)=6\tilde{m}$, i.e., $6(m-\tilde{m})=5t(t-1)$. Note
that $t\geq 2$, then $m-\tilde{m}$ is a nonzero positive integer,
and hence $6|t(t-1)$.

If $s\geq t+1.$ By Lemmas \ref{le11} and \ref{le12}, we know that $-1$ is the
eigenvalue of $G$ and $\tilde{G}$ with multiplicity $s-t-1$ and
$n-k-1,$ respectively. Hence $s-t-1=n-k-1,$ then $k=2t,$ since
$s=n-t.$ By Corollary \ref{co22}, $\lambda_{2}(D(G))=\sqrt{2}-2\approx -0.5858,$
then $\lambda_{2}(D(\tilde{G}))=\sqrt{2}-2$. We claim that $k<6.$
Otherwise, the star $K_{1,6}$ is an induced subgraph of $\tilde{G}$,
by Lemma \ref{le2},
$\lambda_{2}(D(\tilde{G}))\geq\lambda_{2}(D(K_{1,6}))=-0.5678,$ a
contradiction. Note that $t\geq2$, then $k=2t=4$, contradicting
$6|t(t-1)$.

If $s=t.$ By Lemma \ref{le11}, $-1$ is the eigenvalue of $G$ with
multiplicity $0,$ hence $-1$ is also the eigenvalue of $\tilde{G}$
with multiplicity $0.$ By Lemma \ref{le12}, $k=n-1.$ Then
$n_{1}=n_{2}=\cdots=n_{k}=1,$ that is $\tilde{G}=K_{1,n-1}.$ Note
that $-2$ is the eigenvalue of $\tilde{G},$ then $-2$ is also the
eigenvalue of $G,$ we can deduce that $t^{2}-6t+2=0,$ a
contradiction. \ \ $\Box$
\end{proof}

\begin{theorem}\label{th6}
The connected graphs with
$\lambda_{2}(D(G))\leq\frac{17-\sqrt{329}}{2}\approx-0.5692$ are
determined by their $D$-spectra.
\end{theorem}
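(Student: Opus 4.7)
The plan is to combine the structural classification in Lemma \ref{le5} with the three ``uniqueness within a family'' results (Theorems \ref{th3}, \ref{th10}, \ref{th5}) so that the only remaining task is ruling out cross-family coincidences involving $K_n$. Let $G$ satisfy $\lambda_{2}(D(G))\leq\frac{17-\sqrt{329}}{2}$, and let $G'$ be any graph $D$-cospectral to $G$. Since $\lambda_{2}$ is determined by the $D$-spectrum, $G'$ also satisfies $\lambda_{2}(D(G'))\leq\frac{17-\sqrt{329}}{2}$, so by Lemma \ref{le5} both $G$ and $G'$ belong to $\{K_{n}\}\cup\mathbb{K}_{s}^{t}\cup\mathbb{K}^{n_{1},n_{2},\ldots,n_{k}}_{n}$.

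Next I would split into three cases according to which of these families contains $G$, and then rule out the wrong family for $G'$ in each case. If $G\cong K_{n}$, then $\lambda_{2}(D(G))=-1$; Theorem \ref{th20} says that among connected graphs only $K_{n}$ achieves $\lambda_{2}=-1$, so $G'\cong K_{n}$ as well (automatically $n$ matches because $G$ and $G'$ have the same number of vertices, which is visible from the spectrum). If $G\in\mathbb{K}_{s}^{t}$, I would rule out $G'\cong K_{n}$ (the spectrum of $K_{n}$ has $\lambda_{2}=-1$, while that of any $K_{s}^{t}$ satisfies $\lambda_{2}>-1$ by Theorem \ref{th20}) and rule out $G'\in\mathbb{K}^{n_{1},\ldots,n_{k}}_{n}$ by Theorem \ref{th5}; this forces $G'\in\mathbb{K}_{s}^{t}$, and then Theorem \ref{th3} gives $G'\cong G$. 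The case $G\in\mathbb{K}^{n_{1},\ldots,n_{k}}_{n}$ is symmetric: $G'\cong K_{n}$ is again excluded because $\lambda_{2}(D(G))>-1$, $G'\in\mathbb{K}_{s}^{t}$ is excluded by Theorem \ref{th5}, and the remaining possibility $G'\in\mathbb{K}^{n_{1}',\ldots,n_{k}'}_{n}$ combined with Theorem \ref{th10} yields $G'\cong G$.

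All the real work is done in the earlier lemmas (structural classification under the $\lambda_{2}$ bound, plus the three cospectrality results), so no new obstacle arises here; the argument is just a clean case split. The only point worth being careful about is the $K_{n}$ boundary case, which is why I explicitly invoke Theorem \ref{th20} to separate $K_{n}$ from the other two families by the value of $\lambda_{2}$, rather than relying on the (strict) inequality $\lambda_{2}\leq\frac{17-\sqrt{329}}{2}$ alone.
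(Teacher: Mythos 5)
Your proposal is correct and follows essentially the same route as the paper: classify both cospectral mates via Lemma \ref{le5}, then invoke Theorems \ref{th3}, \ref{th10} and \ref{th5} for the within- and cross-family comparisons. The only difference is that you spell out the $K_{n}$ boundary case via Theorem \ref{th20}, a detail the paper dismisses with ``obviously,'' so your write-up is just a more explicit version of the same argument.
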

\begin{proof}
By Lemma \ref{le5}, $G\in K_{n}\cup \mathbb{K}_{s}^{t}\cup
\mathbb{K}^{n_{1},n_{2},\ldots, n_{k}}_{n}.$ Obviously, $K_{n}$ is
determined by its $D$-spectrum. By Theorems \ref{th3}, \ref{th10} and
\ref{th5}, the result follows immediately.
\end{proof}



\small {

}
\end{document}